\newcommand{\bcen}{\begin{center}}     \newcommand{\ecen}{\end{center}}
\newcommand{\bay}{\begin{array}}      \newcommand{\eay}{\end{array}}
\newcommand{\beq}{\begin{eqnarray*}}      \newcommand{\eeq}{\end{eqnarray*}}
\def\gl{\mathrm{gl.dim}}
\def\Hom{\mathrm{Hom}}
\def\op{\mathrm{op}}
\def\Ext{\mathrm{Ext}}
\def\End{\mathrm{End}}
\def\Mod{\mathrm{Mod}}
\def\pd{\mathrm{pd}}
\def\proj{\mathrm{proj}}
\def\top{\mathrm{top}}
\def\tr{\mathrm{tr}}
\def\Tor{\mathrm{Tor}}
\begin{document}

\newtheorem{theorem}{Theorem}
\newtheorem{proposition}{Proposition}
\newtheorem{lemma}{Lemma}
\newtheorem{corollary}{Corollary}
\newtheorem{remark}{Remark}
\newtheorem{example}{Example}
\newtheorem{definition}{Definition}
\newtheorem*{conjecture}{Conjecture}
\newtheorem{question}{Question}

\title{\large\bf Hattori-Stallings trace and character}

\author{\large Yang Han}

\date{\footnotesize KLMM, ISS, AMSS,
Chinese Academy of Sciences, Beijing 100190, P.R. China.\\ E-mail:
hany@iss.ac.cn}

\maketitle

\bcen{\it Dedicated to the memory of Dieter Happel}\ecen

\begin{abstract} It is shown that Hattori-Stallings trace induces
a homomorphism of abelian groups, called Hattori-Stallings
character, from the $K_1$-group of endomorphisms of the perfect
derived category of an algebra to its zero-th Hochschild homology,
which provides a new proof of Igusa-Liu-Paquette Theorem, i.e., the
strong no loop conjecture for finite-dimensional elementary
algebras, on the level of complexes. Moreover, the Hattori-Stallings
traces of projective bimodules and one-sided projective bimodules
are studied, which provides another proof of Igusa-Liu-Paquette
Theorem on the level of modules.
\end{abstract}

\medskip

{\footnotesize {\bf Mathematics Subject Classification (2010)} :
16G10, 16E30, 18E30, 18G10}

\medskip

{\footnotesize {\bf Keywords} : Hattori-Stallings trace, $K_1$-group
of endomorphisms, Hattori-Stallings character.}

\bigskip

\section{Introduction}

\indent\indent Global dimension is a quite important homological
invariant of an algebra or a ring. The (in)finiteness of global
dimension plays an important role in representation theory of
algebras. For instance, the bounded derived category of a
finite-dimensional algebra has Auslander-Reiten triangles if and
only if the algebra is of finite global dimension \cite{Hap2,Hap3}.
There are some well-known conjectures related to the (in)finiteness
of global dimension, such as no loop conjecture, Cartan determinant
conjecture --- the determinant of the Cartan matrix of an artin ring
of finite global dimension is 1 (ref. \cite{F}), Hochschild homology
dimension conjecture --- a finite-dimensional algebra is of finite
global dimension if and only if its Hochschild homology dimension is
0 (ref. \cite{Han}). To a finite-dimensional elementary algebra $A$,
we can associate a quiver $Q$, called its Gabriel quiver (ref.
\cite[Page 65]{ARS}). The (in)finiteness of the global dimension of
$A$ is closely related to the combinatorics of $Q$. If $Q$ has no
oriented cycles then $\gl A < \infty$ (ref. \cite{ENN}). Obviously,
its converse is not true in general. Nevertheless, if $\gl A <
\infty$ then $Q$ must have no loop, and 2-truncated cycle
\cite{BHM}. The former is due to the following conjecture:

\medskip

{\bf No loop conjecture.} {\it Let $A$ be an artin algebra of finite
global dimension. Then $\Ext^1_A(S,S)=0$ for every simple $A$-module
$S$.}

\medskip

The no loop conjecture was first explicitly established for artin
algebras of global dimension two \cite[Proposition]{GGZ}. For
finite-dimensional elementary algebras, which is just the case that
loop has its real geometric meaning, as shown in \cite{I}, this can
be easily derived from an earlier result of Lenzing \cite{L}. A
stronger version of no loop conjecture is the following:

\medskip

{\bf Strong no loop conjecture.} {\it Let $A$ be an artin algebra
and $S$ a simple $A$-module of finite projective dimension. Then
$\Ext^1_A(S,S)=0$.}

\medskip

The strong no loop conjecture is due to Zacharia \cite{I}, which is
also listed as a conjecture in Auslander-Reiten-Smal{\o}'s book
\cite[Page 410, Conjecture (7)]{ARS}. For finite-dimensional
elementary algebras, and particularly, for finite-dimensional
algebras over an algebraically closed field, it was proved in
\cite{ILP}. Some special cases were solved in
\cite{GSZ,J,LM,MP,Sk,Z}.

In this paper, we shall show that Hattori-Stallings trace induces a
homomorphism of abelian groups, called Hattori-Stallings character,
from the $K_1$-group of endomorphisms of the perfect derived
category of an algebra to its zero-th Hochschild homology (see
Section 2), which provides a neat proof of Igusa-Liu-Paquette
Theorem, i.e., the strong no loop conjecture for finite-dimensional
elementary algebras, on the level of complexes (see Section 3).
Moreover, in Section 4, we shall study the Hattori-Stallings traces
of projective bimodules and one-sided projective bimodules, which
provides a simpler proof of Igusa-Liu-Paquette Theorem on the level
of modules. A key point is the bimodule characterization of the
projective dimension of a simple module.

\section{Hattori-Stallings character}

\indent\indent In this section, we shall show that Hattori-Stallings
trace induces a homomorphism of abelian groups, called
Hattori-Stallings character, from the $K_1$-group of endomorphisms
of the perfect derived category of an algebra to its zero-th
Hochschild homology.

\subsection{Hattori-Stallings traces}

\indent\indent Let $A$ be a ring with identity. Denote by $\Mod A$
the category of right $A$-modules, and by $\proj A$ the full
subcategory of $\Mod A$ consisting of all finitely generated
projective right $A$-modules. Denote by $D(A)$ the unbounded derived
categories of the complexes of right $A$-modules, and by $K^b(\proj
A)$ the homotopy category of the bounded complexes of finitely
generated projective right $A$-modules, which is triangle equivalent
to the perfect derived category of $A$.

For each $P \in \proj A$, there is an isomorphism of abelian groups
$$\phi_P : P \otimes_A \Hom_A(P,A) \rightarrow \End_A(P)$$ defined by
$\phi_P(p \otimes f)(p')=pf(p')$ for all $p,p' \in P$ and $f \in
\Hom_A(P,A)$. There is also a homomorphism of abelian groups
$$\psi_P : P \otimes_A \Hom_A(P,A) \rightarrow A/[A,A]$$
defined by $\psi_P(p \otimes f)= \overline{f(p)}$ for all $p \in P$
and $f \in \Hom_A(P,A)$. Here, $[A,A]$ is the additive subgroup of
$A$ generated by all commutators $[a,b] := ab-ba$ with $a,b \in A$.
It is well-known that the abelian group $A/[A,A]$ is isomorphic to
the zero-th Hochschild homology group $HH_0(A)$ of $A$. The
homomorphism of abelian groups
$$\tr_P := \psi_P\phi^{-1}_P : \End_A(P) \rightarrow A/[A,A]$$ is
called {\it the Hattori-Stallings trace of $P$}.

Hattori-Stallings trace has the following properties:

\begin{proposition} {\rm (Hattori \cite{Hat}, Stallings \cite{S}, Lenzing \cite{L})} Let $P,P',P'' \in \proj A$.

{\rm (HS1)} If $f \in \End_A(P)$ and $g \in \Hom_A(P,P')$ is an
isomorphism then $\tr_P(f) = \tr_{P'}(gfg^{-1})$.

{\rm (HS2)} If $f,f' \in \End_A(P)$ then $\tr_P(f+f') = \tr_P(f) +
\tr_P(f')$.

{\rm (HS3)} If $f= \left[\begin{array}{cc} f_{11} & f_{12} \\ f_{21}
& f_{22} \end{array}\right] \in \End_A(P \oplus P')$ then
$\tr_{P\oplus P'}(f) = \tr_P(f_{11}) + \tr_{P'}(f_{22})$.

{\rm (HS4)} If $f \in \Hom_A(P,P')$ and $g \in \Hom_A(P',P)$ then
$\tr_P(gf) = \tr_{P'}(fg)$.

{\rm (HS5)} If $$\xymatrix{  0 \ar[r] & P' \ar[d]^{f'} \ar[r] & P
\ar[d]^{f} \ar[r] & P'' \ar[d]^{f''} \ar[r] & 0 \\ 0 \ar[r] & P'
\ar[r] & P \ar[r] & P'' \ar[r] & 0 }$$ is a commutative diagram with
exact rows then $\tr_P(f) = \tr_{P'}(f') + \tr_{P''}(f'')$.

{\rm (HS6)} If $l_a \in \End_A(A)$ is the left multiplication by $a
\in A$ then $\tr_A(l_a)=\bar{a}$, the equivalence class of $a$ in
$A/[A,A]$.
\end{proposition}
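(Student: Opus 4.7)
The plan is to verify the six properties in an order that lets the later ones lean on the earlier ones, rather than redoing the unpacking of $\phi_P^{-1}$ each time.

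First I would dispose of the two trivial items. Property (HS2) is immediate from the fact that both $\phi_P$ and $\psi_P$ are homomorphisms of abelian groups, so $\tr_P = \psi_P \phi_P^{-1}$ is as well. For (HS6), note that $\phi_A(a \otimes \id_A)(x) = a \cdot \id_A(x) = ax = l_a(x)$, so $\phi_A^{-1}(l_a) = a \otimes \id_A$, and then $\psi_A(a \otimes \id_A) = \overline{\id_A(a)} = \bar a$.

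Next I would attack (HS4), which is the real workhorse. Since $P$ and $P'$ are finitely generated projective, we can write $f \in \Hom_A(P,P')$ as $f(x) = \sum_i p'_i\,\alpha_i(x)$ with $p'_i \in P'$ and $\alpha_i \in \Hom_A(P,A)$, and similarly $g(y) = \sum_j p_j\,\beta_j(y)$ with $p_j \in P$ and $\beta_j \in \Hom_A(P',A)$. A direct computation gives
\[
gf = \sum_{i,j} p_j\beta_j(p'_i) \otimes \alpha_i \in P \otimes_A \Hom_A(P,A),
\qquad
fg = \sum_{i,j} p'_i\alpha_i(p_j) \otimes \beta_j,
\]
so $\tr_P(gf) = \sum_{i,j}\overline{\alpha_i(p_j)\beta_j(p'_i)}$ and $\tr_{P'}(fg) = \sum_{i,j}\overline{\beta_j(p'_i)\alpha_i(p_j)}$. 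These agree modulo $[A,A]$ because $\overline{ab} = \overline{ba}$ in $A/[A,A]$; this passage is the only step where the quotient $A/[A,A]$ actually matters, and is the main (small) obstacle of the proof. Property (HS1) is then the special case where $g$ is an isomorphism: applying (HS4) to the pair $(g,g^{-1}f)$ yields $\tr_P(f) = \tr_P(g^{-1}(fg)) = \tr_{P'}((fg)g^{-1}\cdot 1)$, i.e.\ $\tr_P(f) = \tr_{P'}(gfg^{-1})$.

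For (HS3), decompose $f = \iota_1 f_{11}\pi_1 + \iota_1 f_{12}\pi_2 + \iota_2 f_{21}\pi_1 + \iota_2 f_{22}\pi_2$ with the canonical inclusions $\iota_k$ and projections $\pi_k$ of $P \oplus P'$. By (HS2) and (HS4),
\[
\tr_{P\oplus P'}(\iota_k f_{k\ell}\pi_\ell) \;=\; \tr(\pi_\ell \iota_k f_{k\ell}),
\]
which equals $\tr(f_{kk})$ when $k=\ell$ and vanishes when $k \neq \ell$ since $\pi_\ell \iota_k = 0$. Finally, (HS5) reduces to (HS3): because $P''$ lies in $\proj A$, the bottom (and hence by commutativity the top) short exact sequence of projectives splits, giving $P \cong P' \oplus P''$ under which $f$ becomes upper-triangular $\begin{pmatrix}f' & h \\ 0 & f''\end{pmatrix}$, and (HS3) yields the claimed additivity.
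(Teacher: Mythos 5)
The paper itself gives no proof of this proposition --- it is quoted from Hattori, Stallings and Lenzing --- so there is no internal argument to compare yours with; judged on its own, your proof is correct and is essentially the standard one. The order of attack is sound: (HS2) and (HS6) from the definitions, (HS4) by a dual-basis computation (any $f\in\Hom_A(P,P')$ with $P$ finitely generated projective can indeed be written $f(x)=\sum_i p_i'\alpha_i(x)$), with the cyclic identity $\overline{ab}=\overline{ba}$ in $A/[A,A]$ carrying the whole weight; then (HS1) and (HS3) as formal consequences of (HS4) and (HS2), and (HS5) by splitting the sequence (using that $P''$ is projective) and invoking (HS1) and (HS3), where the commutativity of the diagram forces the conjugated endomorphism to be triangular with diagonal $f',f''$. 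Two cosmetic remarks: in your derivation of (HS1) the expressions ``$g^{-1}f$'' and ``$g^{-1}(fg)$'' do not typecheck as written; what you mean is applying (HS4) to $u=gf\in\Hom_A(P,P')$ and $v=g^{-1}\in\Hom_A(P',P)$, giving $\tr_P(g^{-1}gf)=\tr_{P'}(gfg^{-1})$, which is exactly the claim. And in (HS5) you silently transport $f$ along the isomorphism $P\cong P'\oplus P''$; that uses (HS1), which you have already established, so the logic is fine --- it is just worth saying explicitly.
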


\subsection{$K_1$-groups of endomorphisms}

\indent\indent Let ${\cal C}$ be a category. Denote by $\mathrm{end}
{\cal C}$ the {\it category of endomorphisms} of ${\cal C}$, whose
objects are all pairs $(C,f)$ with $C \in {\cal C}$ and $f \in
\End_{{\cal C}}(C)$ and whose Hom sets are $\Hom_{\mathrm{end} {\cal
C}}((C,f),(C',f')) := \{g \in \Hom_{{\cal C}}(C,C') | gf=f'g \}$.
Obviously, if ${\cal C}$ is a skeletally small category then so is
$\mathrm{end} {\cal C}$.

For a skeletally small triangulated category ${\cal T}$, we define
its {\it $K_1$-group of endomorphisms} (cf. \cite[Chapter III]{B}),
denoted by $K_1(\mathrm{end} {\cal T})$, to be the factor group of
the free abelian group generated by all isomorphism classes of
objects in $\mathrm{end} {\cal T}$ modulo the relations:

(K1) $[(T,f+f')]=[(T,f)]+[(T,f')]$ for all $T \in {\cal T}$ and
$f,f' \in \End_{{\cal T}}(T)$.

(K2) $[(T,f)]=[(T',f')]+[(T'',f'')]$ for every commutative diagram
$$\xymatrix{ T' \ar[d]^{f'} \ar[r] & T
\ar[d]^{f} \ar[r] & T'' \ar[d]^{f''} \ar[r] & \\ T' \ar[r] & T
\ar[r] & T'' \ar[r] & }$$ with triangles as rows.

Clearly, if two skeletally small triangulated categories are
triangle equivalent then their $K_1$-groups of endomorphisms are
isomorphic.

\subsection{Hattori-Stallings character}

\indent\indent For any ring $A$ with identity, both the exact
category $\proj A$ and the triangulated category $K^b(\proj A)$ are
skeletally small. So is $\mathrm{end} K^b(\proj A)$.

The main result in this section is the following:

\begin{theorem} Let $A$ be a ring with identity. Then the map
$$\tr : K_1(\mathrm{end} K^b(\proj A)) \rightarrow A/[A,A], \;\;\;\; [(P^{\bullet},\overline{f^{\bullet}})]
\mapsto \sum_{i \in \mathbb{Z}}(-1)^i \tr_{P^i}(f^i), $$ is a
homomorphism of abelian groups, called {\rm the Hattori-Stallings
character of $A$}, which satisfies the trace property {\rm (TP):}
$$\tr([(P^{\bullet},\overline{g^{\bullet}} \circ
\overline{f^{\bullet}})]) =
\tr([(P'^{\bullet},\overline{f^{\bullet}} \circ
\overline{g^{\bullet}})])$$ for all $\overline{f^{\bullet}} \in
\Hom_{K^b(\proj A)}(P^{\bullet},P'^{\bullet})$ and
$\overline{g^{\bullet}} \in \Hom_{K^b(\proj
A)}(P'^{\bullet},P^{\bullet})$.
\end{theorem}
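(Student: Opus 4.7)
The plan is to verify, in an order where each step can use the earlier ones, that the formula $[(P^\bullet,\overline{f^\bullet})] \mapsto \sum_i(-1)^i\tr_{P^i}(f^i)$ (i) depends only on the chain-homotopy class $\overline{f^\bullet}$, (ii) satisfies the trace property (TP), (iii) depends only on the isomorphism class of $(P^\bullet,\overline{f^\bullet})$ in $\mathrm{end}\, K^b(\proj A)$, (iv) respects (K1), and (v) respects (K2).

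For (i), if $f^\bullet - g^\bullet = ds + sd$ via $s^\bullet : P^\bullet \to P^{\bullet-1}$, then by (HS2) it suffices to show $\sum_n(-1)^n \tr_{P^n}(d^{n-1}s^n + s^{n+1}d^n) = 0$; applying (HS4) to each summand (moving $d$ to the other side) and reindexing shows the two resulting sums cancel termwise. Step (ii) is immediate from (HS4) applied termwise to chain representatives: $\tr_{P^n}(g^n f^n) = \tr_{P'^n}(f^n g^n)$. For (iii), an isomorphism $\overline{h^\bullet}$ with inverse $\overline{k^\bullet}$ and $\overline{h^\bullet f^\bullet} = \overline{g^\bullet h^\bullet}$ gives $\overline{f^\bullet} = \overline{(k^\bullet g^\bullet)\circ h^\bullet}$; two applications of (TP), together with $\overline{h^\bullet k^\bullet} = \id$, yield $\tr(\overline{f^\bullet}) = \tr(\overline{g^\bullet})$. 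Step (iv) is immediate from (HS2) termwise.

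The main work is step (v). Given the commutative diagram with triangle rows, first invoke (iii) to replace the triangle by an isomorphic standard mapping cone triangle $T'^\bullet \xrightarrow{u} T^\bullet \xrightarrow{\iota} C(u)^\bullet \xrightarrow{\pi} T'^\bullet[1]$, where $u$ is a chain map. Choose chain representatives $f^\bullet, f'^\bullet$; the hypothesis $\overline{fu} = \overline{uf'}$ furnishes a chain homotopy $s$ with $fu - uf' = ds + sd$. On $C(u)^n = T^n \oplus T'^{n+1}$ define the candidate
$$\tilde f''^n := \begin{pmatrix} f^n & s^{n+1} \\ 0 & f'^{n+1} \end{pmatrix},$$
check that it is a chain map making the diagram strictly commute, and apply (HS3) to obtain $\tr_{C(u)^n}(\tilde f''^n) = \tr_{T^n}(f^n) + \tr_{T'^{n+1}}(f'^{n+1})$. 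Summing with signs and reindexing yields $\tr(f) = \tr(f') + \tr(\tilde f'')$.

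The hardest part of (v) will be bridging the gap between this $\tilde f''$ and the $f''$ actually given in the diagram, which need not coincide with $\tilde f''$ in $K^b(\proj A)$ since lifts to the cone are not unique in a triangulated category. The plan here is to take any chain representative of $\overline{f''}$ and modify it by an explicit chain homotopy on $C(u)$ into block-upper-triangular form: the homotopy witnessing $\overline{f''\iota} = \overline{\iota f}$ provides the null-homotopy of the lower-left block (viewed as a chain map $T \to T'[1]$), and the homotopy witnessing $\overline{\pi f''} = \overline{f'[1]\pi}$ together with (HS4) handles the required correction to the diagonal blocks. Once the modified chain representative of $\overline{f''}$ is in block-upper-triangular form with diagonal entries $f^n$ and $f'^{n+1}$, (HS3) gives $\tr(f'') = \tr(\tilde f'')$, and combined with the previous computation this proves (v).
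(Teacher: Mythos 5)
Your steps (i)--(iv), and the first half of (v) --- reduction to a standard cone triangle, the explicit fill-in $\tilde f''$ with blocks $f^n$, $s^{n+1}$, $f'^{n+1}$, and the (HS3) computation --- are essentially the paper's Steps 1--3 together with the cylinder/cone-plus-(HS3) computation in its Step 4, so up to that point you are on the paper's route. The genuine gap is the final bridging claim, that the given $\overline{f''}$ admits a chain representative which is block upper triangular with diagonal exactly $f^n$ and $f'^{n+1}$ (equivalently, that $\tr(f'')=\tr(\tilde f'')$). This is not merely the hardest step: it is false in general. Two fill-ins for the same pair $(\overline{f'},\overline{f})$ differ by a map $\psi$ with $\psi\overline{\iota}=0$ and $\overline{\pi}\psi=0$; such a $\psi$ factors as $\theta\overline{\pi}$, and by your own (TP) its alternating trace equals that of $\overline{\pi}\theta$, a square-zero endomorphism of $T'[1]$ in $K^b(\proj A)$ --- and square-zero endomorphisms of perfect complexes can have nonzero Hattori--Stallings character. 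Concretely, let $A=k[x]/(x^2)$, $T'=T=A$ concentrated in degree $0$, $u=l_x$, so $C(u)=(A\xrightarrow{\,x\,}A)$ in degrees $-1,0$; take $f'=f=0$ and $f''=\psi$ with $\psi^{-1}=l_x$, $\psi^{0}=0$. Then $\psi\iota=0$ strictly and $\pi\psi$ is null-homotopic (use $s^0=\mathrm{id}$), so $(0,0,\psi)$ is a morphism of triangles with all three squares commuting in $K^b(\proj A)$, yet $\sum_i(-1)^i\tr_{C^i}(\psi^i)=-\bar{x}\neq 0$ while your $\tilde f''=0$. Hence no homotopy can bring $\psi$ into the prescribed triangular form, and the homotopies coming from the second and third squares cannot close the gap. (Incidentally, the third-square homotopy $\overline{\pi f''}=\overline{f'[1]\pi}$ you invoke is not even granted by (K2) as stated, which displays only two squares; but the example satisfies all three squares, so that is not the real issue.)

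You should know that this defect is not special to your route: the paper's own Step 4 performs the same unproved reduction when it asserts that ``it is enough to consider'' the diagram whose middle vertical map is upper triangular with diagonal $\overline{f'^{\bullet}}$, $\overline{f''^{\bullet}}$, and the example above shows that the character does not in fact respect relation (K2) for arbitrary commutative diagrams with triangles as rows --- the obstruction you ran into is intrinsic to the statement, not to your method. What your construction does prove is additivity for the canonical cone fill-in, equivalently for degreewise split short exact sequences of complexes with strictly commuting endomorphisms, and that weaker statement (which also follows degreewise from (HS3) or (HS5)) is all that the applications in Sections 3 and 4 actually use, since there the maps $l_a$ commute on the nose. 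So either restrict the relation (K2) to such strict data (or to the distinguished fill-in $\tilde f''$), or add a hypothesis forcing square-zero endomorphisms to have trivial character; as it stands, your final step cannot be repaired.
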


\begin{proof} Since $P^{\bullet} \in K^b(\proj A)$, $P^i$ is zero for almost all $i \in \mathbb{Z}$.
Thus the sum $\sum_{i \in \mathbb{Z}}(-1)^i \tr_{P^i}(f^i)$ makes sense.

\medskip

{\it Step 1.} $\sum_{i \in \mathbb{Z}}(-1)^i \tr_{P^i}(f^i)$ is
independent of the choice of the representative $f^{\bullet}$ of the
homotopy equivalence class $\overline{f^{\bullet}}$. Indeed, if
$\overline{f^{\bullet}} = \overline{f'^{\bullet}}$, then
$f^{\bullet} - f'^{\bullet} =
s^{\bullet+1}d^{\bullet}+d^{\bullet-1}s^{\bullet}$ for some homotopy
map $s^{\bullet}$, where $d^{\bullet}$ is the differential of
$P^{\bullet}$. It follows from (HS2) and (HS4) that
$$\begin{array}{rl} & \sum_{i \in \mathbb{Z}}(-1)^i \tr_{P^i}(f^i) - \sum_{i \in
\mathbb{Z}}(-1)^i \tr_{P^i}(f'^i) \\ = & \sum_{i \in
\mathbb{Z}}(-1)^i \tr_{P^i}(f^i-f'^i) \\ = & \sum_{i \in
\mathbb{Z}}(-1)^i \tr_{P^i}(s^{i+1}d^i+d^{i-1}s^i) \\ = & \sum_{i
\in
\mathbb{Z}}(-1)^i (\tr_{P^i}(s^{i+1}d^i)+\tr_{P^i}(d^{i-1}s^i)) \\
= & \sum_{i \in \mathbb{Z}}(-1)^i
(\tr_{P^i}(s^{i+1}d^i)+\tr_{P^{i-1}}(s^id^{i-1})) = 0. \end{array}$$

{\it Step 2.}  $\sum_{i \in \mathbb{Z}}(-1)^i \tr_{P^i}(f^i)$ is
also independent of the choice of the representative
$(P^{\bullet},\overline{f^{\bullet}})$ of the isomorphism class
$[(P^{\bullet},\overline{f^{\bullet}})]$. Indeed, if
$(P^{\bullet},\overline{f^{\bullet}}) \cong
(P'^{\bullet},\overline{f'^{\bullet}})$ then there are morphisms
$\overline{g^{\bullet}} \in \Hom_{K^b(\proj
A)}(P^{\bullet},P'^{\bullet})$ and $\overline{g'^{\bullet}} \in
\Hom_{K^b(\proj A)}(P'^{\bullet},P^{\bullet})$ such that
$\overline{g'^{\bullet}} \circ \overline{g^{\bullet}} = \bar{1}$,
$\overline{g^{\bullet}} \circ \overline{g'^{\bullet}} = \bar{1}$,
and $\overline{f'^{\bullet}} \circ \overline{g^{\bullet}} =
\overline{g^{\bullet}} \circ \overline{f^{\bullet}}$. Thus
$\overline{f^{\bullet}} = \overline{g'^{\bullet}} \circ
\overline{g^{\bullet}} \circ \overline{f^{\bullet}}$ and
$\overline{g^{\bullet}} \circ \overline{f^{\bullet}} \circ
\overline{g'^{\bullet}} = \overline{f'^{\bullet}} \circ \overline{g^{\bullet}} \circ
\overline{g'^{\bullet}} = \overline{f'^{\bullet}}$. It follows from
Step 1 and (HS4) that
$$\begin{array}{rl} \sum_{i \in \mathbb{Z}}(-1)^i \tr_{P^i}(f^i) & =
\sum_{i \in \mathbb{Z}}(-1)^i \tr_{P^i}(g'^ig^if^i) \\ & = \sum_{i
\in \mathbb{Z}}(-1)^i \tr_{P'^i}(g^if^ig'^i) \\ & = \sum_{i \in
\mathbb{Z}}(-1)^i \tr_{P'^i}(f'^ig^ig'^i) \\ & = \sum_{i \in
\mathbb{Z}}(-1)^i \tr_{P'^i}(f'^i). \end{array}$$

Now we have shown that $\tr$ is well-defined on the free abelian
group generated by the isomorphism classes of $\mathrm{end}
K^b(\proj A)$.

\medskip

{\it Step 3.}
$\tr([(P^{\bullet},\overline{f^{\bullet}}+\overline{f'^{\bullet}})])
=\tr([(P^{\bullet},\overline{f^{\bullet}})])
+\tr([(P^{\bullet},\overline{f'^{\bullet}})])$ for all $P^{\bullet}
\in K^b(\proj A)$ and $\overline{f^{\bullet}},
\overline{f'^{\bullet}} \in \End_{K^b(\proj A)}(P^{\bullet})$.
Indeed, this is clear by (HS2).

\medskip

{\it Step 4.} $\tr([(P^{\bullet},\overline{f^{\bullet}})]) =
\tr([(P'^{\bullet},\overline{f'^{\bullet}})])+\tr([(P''^{\bullet},\overline{f''^{\bullet}})])$
for every commutative diagram $$\xymatrix{ P'^{\bullet}
\ar[d]^{\overline{f'^{\bullet}}} \ar[r]^{\overline{u^{\bullet}}} &
P^{\bullet} \ar[d]^{\overline{f^{\bullet}}}
\ar[r]^{\overline{v^{\bullet}}} & P''^{\bullet}
\ar[d]^{\overline{f''^{\bullet}}} \ar[r] &
\\ P'^{\bullet} \ar[r]^{\overline{u^{\bullet}}} & P^{\bullet} \ar[r]^{\overline{v^{\bullet}}} & P''^{\bullet} \ar[r] & }$$
with triangles as rows. Indeed, in $K^b(\proj A)$ each triangle
$P'^{\bullet} \stackrel{\overline{u^{\bullet}}}{\longrightarrow}
P^{\bullet} \stackrel{\overline{v^{\bullet}}}{\longrightarrow}
P''^{\bullet} \longrightarrow$ is isomorphic to a triangle
$$\xymatrix@!=4pc{P'^{\bullet} \ar[r]^-{\left[\!\!\!\begin{array}{c} \bar{1}
\\ 0 \end{array}\!\!\!\right]} & \mathrm{Cyl}(u^{\bullet})
\ar[r]^-{\left[\!\!\!\begin{array}{cc} 0 & \bar{1}
\end{array}\!\!\!\right]} & \mathrm{Cone}(u^{\bullet})
\ar[r] &}$$ where $\mathrm{Cyl}(u^{\bullet})$ and
$\mathrm{Cone}(u^{\bullet})$ are the cylinder and cone of the
cochain map $u^{\bullet} : P'^{\bullet} \longrightarrow P^{\bullet}$
respectively. Thus, by Step 2, it is enough to consider the case that the following diagram
$$\xymatrix@!=4pc{P'^{\bullet} \ar[d]^-{\overline{f'^{\bullet}}} \ar[r]^-{\left[\!\!\!\begin{array}{c} \bar{1}
\\ 0 \end{array}\!\!\!\right]} & P'^{\bullet} \oplus P''^{\bullet} \ar[d]^-{\left[\!\!\!\begin{array}{cc}
\overline{f'^{\bullet}} & \overline{f'''^{\bullet}}
\\ 0 & \overline{f''^{\bullet}} \end{array}\!\!\!\right]}
\ar[r]^-{\left[\!\!\!\begin{array}{cc} 0 & \bar{1}
\end{array}\!\!\!\right]} & P''^{\bullet} \ar[d]^-{\overline{f''^{\bullet}}}
\ar[r] & \\ P'^{\bullet} \ar[r]^-{\left[\!\!\!\begin{array}{c}
\bar{1}
\\ 0 \end{array}\!\!\!\right]} & P'^{\bullet} \oplus P''^{\bullet}
\ar[r]^-{\left[\!\!\!\begin{array}{cc} 0 & \bar{1}
\end{array}\!\!\!\right]} & P''^{\bullet}
\ar[r] & }$$ with triangles as rows is commutative. In this case, by (HS3), we have $\sum_{i \in
\mathbb{Z}}(-1)^i \tr_{P'^i}(f'^i) + \sum_{i \in \mathbb{Z}}(-1)^i
\tr_{P''^i}(f''^i) = \sum_{i \in \mathbb{Z}}(-1)^i \tr_{P'^i \oplus
P''^i}(\left[\!\!\!\begin{array}{cc} f'^i & f'''^i
\\ 0 & f''^i \end{array}\!\!\!\right])$.

Now we have shown that the Hattori-Stallings character $\tr$ is
well-defined. Next, we prove that it satisfies trace property (TP).

\medskip

{\it Step 5.} $\tr([(P^{\bullet},\overline{g^{\bullet}} \circ
\overline{f^{\bullet}})]) =
\tr([(P'^{\bullet},\overline{f^{\bullet}} \circ
\overline{g^{\bullet}})])$ for all morphisms $\overline{f^{\bullet}}
\in \Hom_{K^b(\proj A)}(P^{\bullet},P'^{\bullet})$ and
$\overline{g^{\bullet}} \in \Hom_{K^b(\proj
A)}(P'^{\bullet},P^{\bullet})$. Indeed, this is clear by Step 2 and
(HS4).
\end{proof}

\section{Igusa-Liu-Paquette Theorem}

\indent\indent In this section, we shall apply Hattori-Stallings
character to give a new proof of Igusa-Liu-Paquette Theorem on the
level of complexes. From now on, let $k$ be a field and $A$ a
finite-dimensional elementary $k$-algebra, i.e., $A/J \cong k^n$ for
some natural number $n$, where $J$ denotes the Jacobson radical of
$A$.

\subsection{Projective dimension}

\indent\indent Some homological properties on modules can be
characterized by those of bimodules. For instance, Happel showed
that for a finite-dimensional $k$-algebra $A$, $\gl A =\pd_{A^e}A$
(ref. \cite{Hap1}). In this subsection, we shall give a bimodule
characterization of the projective dimension of a simple module. For
this, we need the following well-known result, which implies that
$\top A = A/J$ is a ``testing module" of the projective dimension of
an $A$-module:

\begin{lemma} \label{test} Let $A$ be an artin algebra, and $M \neq 0$ a
finitely generated left $A$-module. Then $\pd_AM = \mbox{\rm
sup}\{i|\Ext^i_A(M,A/J) \neq 0 \} = \mbox{\rm sup}\{i|\Tor^A_i(A/J,
M) \neq 0 \}.$
\end{lemma}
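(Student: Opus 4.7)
The plan is to compare all three invariants with the length of a minimal projective resolution of $M$. Since $A$ is an artin algebra and $M$ is finitely generated, $M$ admits a minimal projective resolution $P_\bullet \to M$ by finitely generated projective left $A$-modules; the defining property of minimality is that each differential $d_i : P_i \to P_{i-1}$ has image contained in $JP_{i-1}$. By Nakayama's lemma, $P_i \neq 0$ if and only if $P_i/JP_i \neq 0$, and $\pd_A M = \sup\{i \mid P_i \neq 0\}$.

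For the Ext side, any left $A$-module homomorphism $\varphi : P_i \to A/J$ satisfies $\varphi(Jp) = J\varphi(p) = 0$, so it factors through $P_i/JP_i$; hence $\Hom_A(P_i, A/J) \cong \Hom_{A/J}(P_i/JP_i, A/J)$. The induced differentials on $\Hom_A(P_\bullet, A/J)$ vanish because $d_i(P_i) \subseteq JP_{i-1}$ is annihilated after composing with any such $\varphi$. Therefore $\Ext^i_A(M, A/J) \cong \Hom_{A/J}(P_i/JP_i, A/J)$, which is nonzero precisely when $P_i \neq 0$. For the Tor side, viewing $A/J$ as a right $A$-module, $(A/J) \otimes_A P_i \cong P_i/JP_i$, and the induced differentials vanish for exactly the same reason, so $\Tor^A_i(A/J, M) \cong P_i/JP_i$, again nonzero precisely when $P_i \neq 0$. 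Both suprema therefore equal $\sup\{i \mid P_i \neq 0\} = \pd_A M$, covering both the finite and infinite cases uniformly.

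There is no real obstacle. The only inputs to assemble are the existence of a minimal projective resolution for a finitely generated module over an artin algebra and the equivalent characterization of minimality by $\mathrm{Im}(d_i) \subseteq JP_{i-1}$; both are standard facts that may be invoked from \cite{ARS}. Once these are in hand, both identifications are essentially formal, and the proof is short.
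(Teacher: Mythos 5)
Your proof is correct and follows essentially the same route as the paper: take a minimal projective resolution, observe that minimality forces the induced differentials to vanish after applying $\Hom_A(-,A/J)$ or $A/J\otimes_A -$, and conclude that both suprema equal $\sup\{i \mid P_i \neq 0\} = \pd_A M$. Your version merely spells out the Nakayama/semisimplicity argument for why the resulting terms are nonzero exactly when $P_i \neq 0$, which the paper leaves implicit.
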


\begin{proof} Let $P^\bullet$ be a minimal projective resolution of the left $A$-module $M$.
Then all the differentials of the complex $\Hom_A(P^\bullet,A/J)$
are zero. Thus $\Ext^i_A(M,A/J)=\Hom_A(P^{-i},A/J)$. Hence, $\pd_AM
= \mbox{sup}\{i|P^{-i} \neq 0 \} = \mbox{sup}\{i|\Hom_A(P^{-i},A/J)
\neq 0 \} = \mbox{sup}\{i|\Ext^i_A(M,A/J) \neq 0 \}$.

Similarly, all the differentials of the complex $A/J \otimes_A
P^\bullet$ are zero. Thus $\Tor^A_i(A/J,M)= A/J \otimes_A P^{-i}$.
Therefore, $\pd_AM = \mbox{sup}\{i|P^{-i} \neq 0 \} =
\mbox{sup}\{i|A/J \otimes_A P^{-i} \neq 0 \} =
\mbox{sup}\{i|\Tor^A_i(A/J,M) \neq 0 \}$.
\end{proof}

A key point of this paper is the following observation:

\begin{lemma} \label{1 to 2 sided} Let $A$ be a finite-dimensional elementary $k$-algebra, $S=Ae/Je$
the left simple $A$-module corresponding to a primitive idempotent
$e$ in $A$, and $\bar{A} := A/A(1-e)A$. Then $\pd_A S = \pd_{A
\otimes_k \bar{A}^{\op}} \bar{A}$.

\end{lemma}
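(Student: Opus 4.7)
The plan is to reduce both $\pd_A S$ and $\pd_R\bar{A}$ (with $R:=A\otimes_k\bar{A}^{\op}$) to an $\Ext$-vanishing criterion by Lemma \ref{test}, and then to match the resulting $\Ext$-groups through an adjunction induced by a canonical surjective $k$-algebra map $R\twoheadrightarrow A$.

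First I would construct this $k$-algebra map. Since $A$ is elementary and $\bar{A}=A/A(1-e)A$ kills every primitive idempotent of $A$ other than $e$, the quotient $\bar{A}$ is local with $\bar{A}/\bar{J}\cong k$, and the augmentation $\epsilon:\bar{A}\to k$ is a $k$-algebra homomorphism. This produces a surjective $k$-algebra homomorphism
\[
\rho:R=A\otimes_k\bar{A}^{\op}\longrightarrow A,\qquad a\otimes\bar{b}^{\op}\mapsto a\,\epsilon(\bar{b}),
\]
with kernel $A\otimes_k\bar{J}^{\op}$. Through restriction $\rho^{*}$, the simple left $R$-modules are identified with $\rho^{*}S_j$ for $j=1,\dots,n$, where $S_1,\dots,S_n$ enumerate the simple left $A$-modules.

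Next I would set up the adjunction. The left adjoint of $\rho^{*}$ is $\rho_!M=A\otimes_R M$. Using $A=R/\ker\rho$ and the equality $\ker\rho\cdot M=M\bar{J}$ gives $\rho_!M=M/M\bar{J}=M\otimes_{\bar{A}}k$, and $\rho_!$ sends $R$-projectives to $A$-projectives (for instance $\rho_!(Ae_j\otimes_k\bar{A})=Ae_j$). Since $\bar{A}$ is free as a right $\bar{A}$-module, $L\rho_!\bar{A}=\bar{A}\otimes_{\bar{A}}k=k$ is concentrated in degree zero, and the induced left $A$-module structure --- obtained from $A\to\bar{A}\to k$, which sends $e\mapsto 1$, every other primitive idempotent $\mapsto 0$, and $J\mapsto 0$ --- is exactly $S$. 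Applying the ordinary adjunction termwise to a projective resolution of $\bar{A}$ over $R$ therefore yields the identification
\[
\Ext^i_R(\bar{A},\rho^{*}N)\;\cong\;\Ext^i_A(S,N)
\]
for every left $A$-module $N$ and every $i\ge 0$.

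Finally, $R$ is finite-dimensional, hence artin, with $R/\rad R\cong\bigoplus_{j=1}^{n}\rho^{*}S_j$, so Lemma \ref{test} applied to both $A$ and $R$ yields
\[
\pd_{A\otimes_k\bar{A}^{\op}}\bar{A}=\sup_{i,j}\{i:\Ext^i_R(\bar{A},\rho^{*}S_j)\neq 0\}=\sup_{i,j}\{i:\Ext^i_A(S,S_j)\neq 0\}=\pd_A S,
\]
as required. The technical step requiring the most care is the computation of $L\rho_!\bar{A}$ together with the verification that $\rho_!$ applied to an $R$-projective resolution of $\bar{A}$ yields an $A$-projective resolution of $S$ (so that the ordinary adjunction computes $\Ext$ correctly); everything else is routine bookkeeping with the bimodule actions and the surjection $\rho$.
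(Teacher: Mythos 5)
Your argument is correct, but it runs along a genuinely different track from the paper's. The paper stays entirely on the Tor side of Lemma \ref{test}: it writes $\Tor^A_i(A/J,S)\cong H^{-i}(A/J\otimes^L_A\bar{A}\otimes^L_{\bar{A}}S)\cong \Tor^{A\otimes_k\bar{A}^{\op}}_i(A/J\otimes_kS,\bar{A})$ by associativity of derived tensor products, observes that $A/J\otimes_kS=\top(A\otimes_k\bar{A}^{\op})$, and applies the test lemma twice --- four lines, no auxiliary ring map. You instead work on the Ext side: you build the explicit surjection $\rho:R=A\otimes_k\bar{A}^{\op}\twoheadrightarrow A$ (essentially $\id_A\otimes\,\epsilon$), check $\ker\rho=A\otimes_k\bar{J}^{\op}\subseteq\rad R$ so that the simples of $R$ are the $\rho^{*}S_j$, and transport $\Ext^i_R(\bar{A},\rho^{*}N)\cong\Ext^i_A(S,N)$ through the extension-of-scalars adjunction after checking that $\rho_!$ of an $R$-projective resolution of $\bar{A}$ is an $A$-projective resolution of $S$. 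The two proofs hinge on the same hidden fact --- an $R$-projective resolution of $\bar{A}$ restricts to a split-acyclic complex of projective right $\bar{A}$-modules --- which the paper absorbs into the derived-tensor associativity and you make explicit in the computation of $L\rho_!\bar{A}$. The paper's route is shorter and needs no identification of the simple $R$-modules; yours is more elementary (no derived categories) and makes the mechanism visible, at the cost of more bookkeeping. One spot where your justification is terser than it should be: ``since $\bar{A}$ is free as a right $\bar{A}$-module'' is only half the reason that $L\rho_!\bar{A}$ is concentrated in degree zero; you also need that projective $R$-modules restrict to projective right $\bar{A}$-modules, so that the augmented resolution is a bounded-above acyclic complex of $\bar{A}$-projectives, hence contractible over $\bar{A}$ and still exact after $-\otimes_{\bar{A}}k$. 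You flag this as the delicate step, and the missing half is immediate from $R=A\otimes_k\bar{A}^{\op}$, so this is a matter of completeness rather than a gap.
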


\begin{proof} We have isomorphisms $\Tor^A_i(A/J,S) \cong H^{-i}(A/J \otimes^L_AS) \cong
H^{-i}(A/J \otimes^L_A \bar{A} \otimes^L_{\bar{A}} S) \cong
H^{-i}((A/J \otimes_k S) \otimes^L_{A \otimes_k \bar{A}^{\op}}
{\bar{A}}) \cong \Tor^{A \otimes_k \bar{A}^{\op}}_i(A/J \otimes_k
S,\bar{A})$. Applying Lemma~\ref{test} twice, we obtain $\pd_A S =
\pd_{A \otimes_k \bar{A}^{\op}} \bar{A}$, since $A/J \otimes_k S =
\top (A \otimes_k \bar{A}^{\op})$.
\end{proof}

\subsection{A new proof of Igusa-Liu-Paquette Theorem}

\begin{theorem} \label{ILP} {\rm (Igusa-Liu-Paquette \cite{ILP})} Let
$A$ be a finite-dimensional elementary $k$-algebra, $S$ a left
simple $A$-module, and $\pd_AS < \infty$. Then $\Ext^1_A(S,S)=0$.
\end{theorem}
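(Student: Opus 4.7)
My plan is to argue by contradiction: assume $\pd_A S < \infty$ together with $\Ext^1_A(S,S) \neq 0$, and use the Hattori--Stallings character of Theorem~1 to test a putative loop against the finite projective resolution of $S$. Working with right modules throughout (the statement is left--right symmetric), write $S = eA/eJ$ for a primitive idempotent $e$; the Ext hypothesis becomes, for elementary $A$, the existence of some $a \in eJe$ with $a \notin eJ^2e$.

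For the chain-level construction, take a minimal projective resolution $P^\bullet \to S$ in $K^b(\proj A)$ with $P^0 = eA$. Since $a \in J$ sends $eA$ into $eJ = \ker(P^0 \to S)$, left multiplication $l_a \colon eA \to eA$ lifts to a chain endomorphism $f^\bullet \colon P^\bullet \to P^\bullet$ with $f^0 = l_a$, and minimality lets me arrange $f^i \in \rad\, \End_A(P^i)$ for $i \leq -1$: writing $P^i = \bigoplus_s e_{j_s^{(i)}} A$ and $f^i = (l_{b_{s,t}^{(i)}})_{s,t}$, every entry satisfies $b_{s,t}^{(i)} \in e_{j_t^{(i)}} J e_{j_s^{(i)}}$.

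Because $a \in J$ acts as zero on $S$, and because the natural map $\End_{K^b(\proj A)}(P^\bullet) \to \End_A(S) = k$ (coming from the full faithfulness of $K^b(\proj A) \hookrightarrow D^b(\Mod A)$, together with $\End_A(S) = k$ for $A$ elementary) is an isomorphism, the class $\overline{f^\bullet}$ vanishes in $K^b(\proj A)$; that is, $f^\bullet$ is null-homotopic. Theorem~1 then yields $\tr([(P^\bullet, \overline{f^\bullet})]) = 0$, and combined with the computation $\tr_{P^0}(l_a) = \bar a$ from (HS6) and the identification $\End_A(eA) \cong eAe$, this gives
\[
\bar a + \sum_{i \leq -1}(-1)^i \tr_{P^i}(f^i) = 0 \quad \text{in } A/[A,A].
\]

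To convert this into a contradiction, I would project onto a ``loop space at $e$''. Any product of two elements of $eJe$ lies in $eJ^2e$, so $eAe/eJ^2e$ is commutative and hence $[eAe, eAe] \subset eJ^2e$; decomposing $e[x,y]e$ according to the idempotent $e$ shows $e[A,A]e \subset eJ^2e$, so $\pi \colon \bar x \mapsto exe + eJ^2e$ defines a $k$-linear map $A/[A,A] \to eAe/eJ^2e$. Under $\pi$ the $i=0$ contribution is $a + eJ^2e$, nonzero by the choice of $a$, while by (HS3) each $\pi(\tr_{P^i}(f^i))$ for $i \leq -1$ reduces to $\sum_{s : j_s^{(i)} = e} b_{s,s}^{(i)} + eJ^2e$. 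The main obstacle, and the technical heart of the proof, is to show that the alternating sum of these contributions lies in $eJ^2e$: this demands a fine analysis of the chain-map identity $d^{i} f^{i} = f^{i+1} d^{i}$ restricted to the $eA$-summands of $P^\bullet$ (where, by minimality, $d^{-1}$ identifies the $n_e$ copies of $eA$ in $P^{-1}$ with a basis of $eJe/eJ^2e$ given by the loops at $e$), together with the freedom to modify $f^\bullet$ by null-homotopies, propagated inductively up the resolution. Once secured, the projected identity becomes $a \equiv 0 \pmod{eJ^2e}$, contradicting $a \notin eJ^2e$.
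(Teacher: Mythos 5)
Your reduction to the identity $\bar a+\sum_{i\le -1}(-1)^i\tr_{P^i}(f^i)=0$ in $A/[A,A]$ is correct (the lift of $l_a$ is null-homotopic, Theorem 1 applies, $\tr_{P^0}(l_a)=\bar a$, and $e[A,A]e\subseteq eJ^2e$ does hold for elementary $A$, so your projection $\pi$ is well defined). But what you yourself call ``the main obstacle, and the technical heart of the proof'' --- that the alternating sum of the diagonal entries $b^{(i)}_{s,s}\in eJe$ attached to the $eA$-summands of $P^i$, $i\le -1$, lies in $eJ^2e$ --- is not a technicality to be secured later: it is essentially the strong no loop conjecture itself, and nothing in your outline supplies it. Minimality (equivalently, the null-homotopy $f^i=s^{i+1}d^i+d^{i-1}s^i$) only gives that all entries lie in $J$, and the hypothesis $\pd_AS<\infty$ enters your argument solely through the boundedness of $P^\bullet$; after applying $\pi$ you are left with unknown loop coefficients in $eJe/eJ^2e$ in every negative degree, and there is no visible mechanism --- neither the relations $d^if^i=f^{i+1}d^i$ nor further homotopy modifications --- that forces them to cancel. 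So the proposal is a reduction of the theorem to an unproven claim of comparable difficulty, not a proof.

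The paper avoids this obstruction by never lifting $l_a$ along a one-sided resolution of $S$. Its key step, Lemma \ref{1 to 2 sided}, converts $\pd_AS<\infty$ into $\pd_{A\otimes_k\bar A^{\op}}\bar A<\infty$ for $\bar A=A/A(1-e)A$, so one may tensor the radical filtration $0=J^t\subseteq\cdots\subseteq J^0=A$ with a bounded projective bimodule resolution $P^\bullet$ of $\bar A$, landing in $K^b(\proj\bar A)$. The endomorphism being traced is then the honest multiplication $l_a$ --- there are no higher correction terms at all --- and it is literally zero on every subquotient $J^j/J^{j+1}$; additivity of the Hattori--Stallings character over the resulting triangles immediately gives $\bar{\bar a}=0$, i.e.\ $\bar J\subseteq[\bar A,\bar A]$, and the commutative local quotient $\bar A/\bar J^2$ finishes the argument. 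If you want to rescue your strategy, some such device replacing the lifted chain map by a genuine multiplication operator (the bimodule characterization of $\pd_AS$, or an equivalent) is exactly the missing ingredient.
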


\begin{proof} We may assume that $S$ is
the left simple $A$-module $Ae/Je$ corresponding to a primitive
idempotent $e$ in $A$ and $\bar{A} := A/A(1-e)A$.

We have the following commutative diagram in $D(A)$:
$$\xymatrix{ J^{j+1} \ar[d]^-{l_a} \ar[r] &
J^j \ar[d]^-{l_a} \ar[r] & J^j/J^{j+1} \ar[d]^-{l_a=0} \ar[r] &\\
J^{j+1} \ar[r] & J^j \ar[r] & J^j/J^{j+1} \ar[r] &}$$ with triangles
as rows for all $a \in J$, the Jacobson radical of $A$, and $0 \leq
j \leq t-1$ where $t$ is the Loewy length of $A$. Applying the
derived tensor functor $- \otimes^L_A \bar{A}$ to the commutative
diagram above, we obtain the following commutative diagram in
$D(\bar{A})$:
$$\xymatrix{ J^{j+1} \otimes^L_A \bar{A} \ar[d]^-{l_a \otimes^L_A \bar{A}}
\ar[r] &
J^j \otimes^L_A \bar{A} \ar[d]^-{l_a \otimes^L_A \bar{A}}
\ar[r] &
(J^j/J^{j+1}) \otimes^L_A \bar{A} \ar[d]^-{l_a \otimes^L_A \bar{A}=0} \ar[r] &\\
J^{j+1} \otimes^L_A \bar{A} \ar[r] & J^j \otimes^L_A \bar{A} \ar[r]
& (J^j/J^{j+1}) \otimes^L_A \bar{A} \ar[r] &}$$ with triangles as
rows for all $a \in J$ and $0 \leq j \leq t-1$. By the assumption
$\pd_AS< \infty$ and Lemma~\ref{1 to 2 sided}, we have a bounded
finitely generated projective $A$-$\bar{A}$-bimodules resolution
$P^\bullet$ of $\bar{A}$. Thus we have the following commutative
diagram in $K^b(\proj \bar{A})$:
$$\xymatrix{ J^{j+1} \otimes_A P^\bullet \ar[r] \ar[d]^{l_a} & J^j \otimes_A P^\bullet
\ar[r] \ar[d]^{l_a} &
(J^j/J^{j+1}) \otimes_A P^\bullet \ar[d]^{l_a=0} \ar[r] & \\
J^{j+1} \otimes_A P^\bullet \ar[r] & J^j \otimes_A P^\bullet \ar[r]
& (J^j/J^{j+1}) \otimes_A P^\bullet \ar[r] & }$$ with triangles as
rows for all $a \in J$ and $0 \leq j \leq t-1$. Therefore, for any
$\bar{a} \in \bar{J}$, the Jacobson radical of $\bar{A}$, the
equivalence class of $\bar{a}$ in $\bar{A}/[\bar{A},\bar{A}]$

$$\begin{array}{rcl} \bar{\bar{a}} = \tr([(\bar{A},l_{\bar{a}})]) = \tr([(\bar{A},l_a)]) &
= & \tr([(J^0 \otimes_A P^\bullet, l_a)]) \\ & = & \tr([(J^1
\otimes_A P^\bullet, l_a)]) \\ & = & \cdots
\\ & = & \tr([(J^t \otimes_A P^\bullet, l_a)]) \\ & = & \tr([(0,0)]) = 0.
\end{array}$$ Hence, $\bar{J} \subseteq [\bar{A},\bar{A}]$.

Let $A' := \bar{A}/\bar{J}^2$ and $J' = \bar{J}/\bar{J}^2$ its
Jacobson radical. Then $A'$ is a local algebra with radical square
zero, and thus commutative. Since $\bar{J} \subseteq [\bar{A},
\bar{A}]$, we have $J' \subseteq [A',A'] = 0$, i.e., $J'=0$. Hence,
$\Ext^1_A(S,S) \cong eJe/eJ^2e \cong J' = 0$.

\end{proof}

\section{Hattori-Stallings traces of bimodules}

\indent\indent In this section, we shall study the Hattori-Stallings
traces of projective bimodules and one-sided projective bimodules,
which provides another proof of Igusa-Liu-Paquette Theorem on the
level of modules.

Firstly, we consider the Hattori-Stallings traces of finitely
generated projective bimodules.

\begin{proposition} \label{2 proj bimod trace} Let $A$ and $B$ be
finite-dimensional $k$-algebras, and $P$ a finitely generated
projective $A$-$B$-bimodule. Then $\tr_{P_B}(l_a)=0$ for all $a \in
J$, the Jacobson radical of $A$.
\end{proposition}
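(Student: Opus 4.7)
The plan is to reduce to the ``free'' case $P=A\otimes_k B$ using that any finitely generated projective $A$-$B$-bimodule is a bimodule summand of $(A\otimes_k B)^n$, and then to exploit that every $a\in J$ is nilpotent (the Jacobson radical $J$ of the finite-dimensional algebra $A$ is a nilpotent ideal, so $a^N=0$ for some $N$). Concretely, first I would fix bimodule maps $\iota:P\hookrightarrow(A\otimes_k B)^n$ and $\pi:(A\otimes_k B)^n\twoheadrightarrow P$ with $\pi\iota=\id_P$. Since $\iota$ and $\pi$ are in particular left $A$-linear they commute with $l_a$, so $l_a=\pi\circ l_a\circ\iota$ on $P$. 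Applying (HS4) to $\iota$ and $\pi\circ l_a$ gives
\[\tr_{P_B}(l_a)=\tr_{(A\otimes_k B)^n_B}(\iota\pi\circ l_a).\]
Writing an arbitrary bimodule endomorphism of $(A\otimes_k B)^n$ as a matrix $(\phi_{ij})$ of bimodule maps $\phi_{ij}:A\otimes_k B\to A\otimes_k B$ and using (HS3), the problem reduces to showing that for every bimodule endomorphism $\phi$ of $A\otimes_k B$ and every $a\in J$ one has $\tr_{(A\otimes_k B)_B}(\phi\circ l_a)=0$.

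For this, a bimodule endomorphism $\phi$ of $A\otimes_k B$ is determined by $\phi(1\otimes 1)=\sum_r c_r\otimes d_r\in A\otimes_k B$ and acts as $\phi(x\otimes y)=\sum_r(xc_r)\otimes(d_r y)$. Fixing a $k$-basis $\{a_1,\dots,a_m\}$ of $A$ turns $(A\otimes_k B)_B$ into a free right $B$-module on $\{a_i\otimes 1\}$, and a direct matrix computation of $\phi\circ l_a$ in this basis yields
\[\tr_{(A\otimes_k B)_B}(\phi\circ l_a)=\sum_r\mathrm{tr}_k\bigl(l_a\circ r_{c_r}:A\to A\bigr)\cdot\overline{d_r}\in B/[B,B],\]
where $r_{c_r}$ denotes right multiplication on $A$ by $c_r$. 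The main point of the argument -- the step I expect to be the only non-trivial observation -- is that each summand vanishes: $l_a$ and $r_{c_r}$ commute as $k$-endomorphisms of $A$, so $(l_a r_{c_r})^N=l_{a^N}\circ r_{c_r^N}$, and since $a\in J$ and $J$ is nilpotent we have $a^N=0$ for some $N$. Hence $l_a\circ r_{c_r}$ is a nilpotent endomorphism of the finite-dimensional $k$-space $A$, and therefore has $k$-trace zero. This kills every term and finishes the proof.
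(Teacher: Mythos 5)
Your proof is correct, but it follows a genuinely different route from the paper. The paper filters $A$ by powers of its radical, tensors the exact sequences $0 \to J^{j+1} \to J^j \to J^j/J^{j+1} \to 0$ with $P$ over $A$ (exactness and right-$B$-projectivity being preserved because $P$ is a projective bimodule), and then iterates the additivity property (HS5), using that $l_a$ acts as zero on each semisimple layer $J^j/J^{j+1}$; this is deliberately parallel to the triangle-by-triangle argument of Section 3 and feeds directly into the syzygy argument of Proposition~\ref{1 proj bimod trace}. You instead reduce, via (HS4) and (HS3), to a single bimodule endomorphism $\phi$ of the free bimodule $A\otimes_k B$ and compute $\tr_{(A\otimes_k B)_B}(\phi\circ l_a)$ explicitly in a right-$B$-basis, concluding from the nilpotence of $a$ that each coefficient is the $k$-trace of the nilpotent operator $l_a\circ r_{c_r}$, hence zero. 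Your reduction steps check out: $\pi\circ l_a\circ\iota = l_a$ because $\iota,\pi$ are left $A$-linear, (HS4) then transfers the trace to $\iota\pi\circ l_a$, and although $\iota\pi\circ l_a$ is itself only a right $B$-module endomorphism (not a bimodule map), its matrix entries are $\phi_{ij}\circ l_a$ with $(\phi_{ij})$ the bimodule matrix of $\iota\pi$, and (HS3) picks out only the diagonal ones -- you should phrase this slightly more carefully, but the argument is sound. What each approach buys: yours is more elementary and self-contained (it avoids (HS5) entirely and makes the role of nilpotence of $J$ completely explicit, very much in the spirit of Lenzing's original trace computation), while the paper's filtration argument is structural, needs no choice of basis or explicit description of bimodule maps, and transfers verbatim to the complex-level proof in Section 3 and to the one-sided projective situation of Proposition~\ref{1 proj bimod trace}, where the module is no longer a summand of a free bimodule and your reduction would not apply directly.
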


\begin{proof} We have the following commutative diagram in $\Mod A$:
$$\xymatrix{ 0 \ar[r] & J^{j+1} \ar[d]^-{l_a} \ar[r] &
J^j \ar[d]^-{l_a} \ar[r] & J^j/J^{j+1} \ar[d]^-{l_a=0} \ar[r] & 0 \\
0 \ar[r] & J^{j+1} \ar[r] & J^j \ar[r] & J^j/J^{j+1} \ar[r] & 0}$$
with exact rows for all $a \in J$ and $0 \leq j \leq t-1$ where $t$
is the Loewy length of $A$. Since $P$ is a finitely generated
projective $A$-$B$-bimodule, we have the following commutative
diagram in $\proj B$:
$$\xymatrix{ 0 \ar[r] & J^{j+1} \otimes_A P \ar[d]^-{l_a} \ar[r] &
J^j \otimes_A P \ar[d]^-{l_a} \ar[r] & (J^j/J^{j+1}) \otimes_A P \ar[d]^-{l_a=0} \ar[r] & 0 \\
0 \ar[r] & J^{j+1} \otimes_A P \ar[r] & J^j \otimes_A P \ar[r] &
(J^j/J^{j+1}) \otimes_A P \ar[r] & 0}$$ with exact rows for all $a
\in J$ and $0 \leq j \leq t-1$. It follows from (HS5) that
$\tr_{P_B}(l_a) = \tr_{J^0 \otimes_A P_B}(l_a) = \tr_{J^1 \otimes_A
P_B}(l_a) = \cdots = \tr_{J^{t-1} \otimes_A P_B}(l_a=0) = 0$ for all
$a \in J$.
\end{proof}

Secondly, we consider the Hattori-Stallings traces of finitely
generated one-sided projective bimodules.

\begin{proposition} \label{1 proj bimod trace} Let $A$ and $B$ be
finite-dimensional $k$-algebras, $M$ a finitely generated
$A$-$B$-bimodule which is projective as a right $B$-module, and
$P^\bullet$ a finitely generated projective $A$-$B$-bimodule
resolution of $M$. Then
$$\tr_{M_B}(l_a) = (-1)^i \tr_{\Omega_i(M)}(l_a)$$ for all $a \in J$
and $i \in \mathbb{N}$, where $\Omega_i(M)$ is the $i$-th syzygy of
$M$ on $P^\bullet$.
\end{proposition}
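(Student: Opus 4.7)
The plan is to peel off the resolution one short exact sequence at a time, use (HS5) together with Proposition~\ref{2 proj bimod trace}, and induct on $i$.

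First I would verify that each syzygy $\Omega_i(M)$ is projective as a right $B$-module so that the Hattori--Stallings trace $\tr_{\Omega_i(M)_B}(l_a)$ is well-defined. Since every finitely generated projective $A$-$B$-bimodule is, in particular, projective as a right $B$-module, the short exact sequences
\[
0 \longrightarrow \Omega_{i+1}(M) \longrightarrow P^{-i} \longrightarrow \Omega_i(M) \longrightarrow 0
\]
(with the convention $\Omega_0(M) = M$) split as sequences of right $B$-modules; hence $\Omega_{i+1}(M)$ is a right $B$-summand of $P^{-i}$ and is therefore finitely generated projective over $B$. This justifies taking traces at every stage.

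Next, for any $a \in J$, left multiplication $l_a$ is a morphism of $A$-$B$-bimodules and in particular of right $B$-modules, and it commutes with the bimodule differentials of $P^{\bullet}$. Consequently, each of the above short exact sequences fits into a commutative diagram with $l_a$ as all three vertical maps. Applying property (HS5) yields
\[
\tr_{P^{-i}_B}(l_a) \;=\; \tr_{\Omega_i(M)_B}(l_a) \;+\; \tr_{\Omega_{i+1}(M)_B}(l_a).
\]
Since $P^{-i}$ is a finitely generated projective $A$-$B$-bimodule, Proposition~\ref{2 proj bimod trace} gives $\tr_{P^{-i}_B}(l_a) = 0$, and therefore
\[
\tr_{\Omega_i(M)_B}(l_a) \;=\; -\,\tr_{\Omega_{i+1}(M)_B}(l_a).
\]

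Finally, a straightforward induction on $i$, starting from $\tr_{M_B}(l_a) = \tr_{\Omega_0(M)_B}(l_a)$, gives the desired identity $\tr_{M_B}(l_a) = (-1)^i \tr_{\Omega_i(M)_B}(l_a)$. I do not foresee a serious obstacle: the only nontrivial point is recognizing that the Jacobson radical $J$ enters solely through Proposition~\ref{2 proj bimod trace}, which is exactly what makes the vanishing in the inductive step work. Everything else is a direct bookkeeping exercise with (HS5).
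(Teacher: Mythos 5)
Your argument is correct and essentially identical to the paper's own proof: apply (HS5) to the short exact sequences $0\to\Omega_{i}(M)\to P^{-i+1}\to\Omega_{i-1}(M)\to 0$ with $l_a$ as all three vertical maps, kill the middle trace by Proposition~\ref{2 proj bimod trace}, and induct on $i$. One small repair in your preliminary step: the splitting of these sequences over $B$ follows (inductively, starting from the hypothesis that $M_B$ is projective) from the $B$-projectivity of the \emph{quotient} $\Omega_i(M)$, not from that of the middle term $P^{-i+1}$; the $B$-projectivity of the projective bimodule $P^{-i+1}$ is instead what makes the resulting $B$-summand $\Omega_{i+1}(M)_B$ projective.
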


\begin{proof} Since $M_B$ is projective, all $\Omega_i(M)_B$'s are projective.
We have the following commutative diagrams in $\proj B$:
$$\xymatrix{ 0 \ar[r] & \Omega_i(M) \ar[d]^-{l_a} \ar[r] &
P^{-i+1} \ar[d]^-{l_a} \ar[r] & \Omega_{i-1}(M) \ar[d]^-{l_a} \ar[r] & 0 \\
0 \ar[r] & \Omega_i(M) \ar[r] & P^{-i+1} \ar[r] & \Omega_{i-1}(M)
\ar[r] & 0}$$ with exact rows for all $a \in J$ and $i \geq 1$. By
Proposition~\ref{2 proj bimod trace} and (HS5), we obtain
$\tr_{\Omega_i(M)}(l_a) = - \tr_{\Omega_{i-1}(M)}(l_a)$, thus
$\tr_{M_B}(l_a) = \tr_{\Omega_0(M)}(l_a) = - \tr_{\Omega_1(M)}(l_a)
\linebreak  = \cdots = (-1)^i \tr_{\Omega_i(M)}(l_a)$ for all $a \in
J$ and $i \in \mathbb{N}$.
\end{proof}

Finally, we provide another proof of Igusa-Liu-Paquette Theorem,
i.e., Theorem~\ref{ILP}, on the level of modules.

\begin{proof} By the assumption $\pd_AS < \infty$ and Lemma~\ref{1 to 2 sided},
we have $\pd_{A \otimes_k \bar{A}^{\op}} \bar{A} < \infty$. It
follows from Proposition~\ref{1 proj bimod trace} that, for any
$\bar{a} \in \bar{J}$, the equivalence class of $\bar{a}$ in
$\bar{A}/[\bar{A},\bar{A}]$, $\bar{\bar{a}} =
\tr_{\bar{A}}(l_{\bar{a}}) = \tr_{\bar{A}}(l_a) = (-1)^i
\tr_{\Omega_i(M)}(l_a)$ which equals 0 for $i> \pd_{A \otimes_k
\bar{A}^{\op}} \bar{A}$. Thus $\bar{J} \subseteq [\bar{A},\bar{A}]$.
Then we may continue as the last paragraph of the proof of
Theorem~\ref{ILP} in Section 3.2.
\end{proof}

\noindent {\footnotesize {\bf ACKNOWLEDGMENT.} The author is
sponsored by Project 11171325 NSFC.}

\footnotesize


\begin{thebibliography}{99}

\bibitem{ARS}  M. Auslander, I. Reiten and S.O. Smal{\o}, Representation Theory of Artin Algebras, Cambridge
Studies in Advanced Mathematics 36, Cambridge University Press,
Cambridge, 1995.

\bibitem{B}  H. Bass, K-theory and stable algebra, I.H.E.S. Publ. Math. 22
(1964), 5--60.

\bibitem{BHM}  P.A. Bergh, Y. Han and D. Madsen, Hochschild homology and truncated cycles, Proc. Amer.
Math. Soc. 140 (2012), 1133--1139.

\bibitem{ENN}  S. Eilenberg, H. Nagao and T. Nakayama, On the dimension of
modules and algebras. IV. Dimension of residue rings of hereditary
rings, Nagoya Math. J. 10 (1956), 87--95.

\bibitem{F}  K.R. Fuller, The Cartan determinant and global dimension of
Artinian rings, Contemp. Math. 124 (1992), 51--72.

\bibitem{GGZ}  E.L. Green, W.H. Gustafson and D. Zacharia, Artin
rings of global dimension two, J. Algebra 92 (1985), 375--379.

\bibitem{GSZ}  E.L. Green, {\O}. Solberg and D. Zacharia, Minimal projective
resolutions, Trans. Amer. Math. Soc. 353 (2001), 2915--2939.

\bibitem{Han} Y. Han, Hochschild (co)homology dimension. J. London Math. Soc.
73 (2006), 657--668.

\bibitem{Hap1} D. Happel, Hochschild cohomology of finite-dimensional
algebras, in: S{\'e}minaire d'Alg{\`e}bre Paul Dubreil et
Marie-Paul Malliavin, 39{\`e}me Ann{\'e}e (Paris, 1987/1988),
Lecture Notes in Math., Vol. 1404, Springer, Berlin, 1989, pp.
108--126.

\bibitem{Hap2}  D. Happel, On the derived category of a finite dimensional
algebra, Comment. Math. Helv. 62 (1987) 339--389.

\bibitem{Hap3}  D. Happel, Auslander-Reiten triangles in derived categories of
finite-dimensional algebras, Proc. Amer. Math. Soc. 112 (1991)
641--648.

\bibitem{Hat}  A. Hattori, Rank element of a projective module, Nagoya Math. J.
25 (1965), 113--120.

\bibitem{I}  K. Igusa, Notes on the no loops conjecture, J. Pure Appl. Algebra 69 (1990), 161--176.

\bibitem{ILP}  K. Igusa, S.P. Liu and C. Paquette, A proof of the
strong no loop conjecture, Adv. Math. 228 (2011), 2731--2742.

\bibitem{J}  B.T. Jensen, Strong no-loop conjecture for algebras with two
simples and radical cube zero, Colloq. Math. 102 (2005), 1--7.

\bibitem{L}  H. Lenzing, Nilpotente elemente in ringen von endlicher
globaler dimension, Math. Z. 108 (1969), 313--324.

\bibitem{LM}  S. Liu and J.P. Morin, The strong no loop conjecture for
special biserial algebras, Proc. Amer. Math. Soc. 132 (2004),
3513--3523.

\bibitem{MP}  N. Marmaridis and A. Papistas, Extensions of abelian categories
and the strong no-loops conjecture, J. Algebra 178 (1995), 1--20.

\bibitem{Sk}  D. Skorodumov, The strong no loop conjecture for mild
algebras, J. Algebra 336 (2011), 301--320.

\bibitem{S}  J. Stallings, Centerless groups --- an algebraic formulation of
Gottlieb’s theorem, Topology 4 (1965), 129--134.

\bibitem{Z}  D. Zacharia, Special monomial algebras of finite global
dimension, in: Perspectives in Ring Theory, Proc. NATO Adv. Res.
Workshop, Antwerp/Belg. 1987, in: NATO ASI Ser. C, vol. 233, 1988,
pp. 375--378.

\end{thebibliography}
\end{document}